\newtheorem*{theorem}{Theorem}
\newtheorem*{lemma}{Lemma}
\begin{document}
\title{On Doubly Periodic Phases}
\author{Marius B. Stefan}
\address{Waltham, MA 02453, USA}
\email{stefanmb@member.ams.org}
\subjclass[2010]{Primary 30Dxx}
\begin{abstract}
Every meromorphic function on $\mathbb{C}$ with doubly periodic
phase is equal to an elliptic function multiplied by a meromorphic
function determined by the periods.
\end{abstract}
\maketitle

The purpose of this note is to completely characterize the
meromorphic functions on $\mathbb{C}$ that have doubly periodic
phases. In particular, our result shows that the characterization
suggested by G. Semmler and E. Wegert in \cite{SeWe} is too narrow.
The phase of a function $f(z)$ defined on an open set
$D\subset\mathbb{C}$ with values in $\widehat{\mathbb{C}}$ is the
function $\frac{f(z)}{|f(z)|}$ defined on $\{z\in
D:f(z)\in\widehat{\mathbb{C}}\setminus\{0,\infty\}\}$ with values in
$\mathbb{T}$.

We recall the definition of the Weierstrass sigma-function
(\cite{We2}). If $p_1,p_2\in\mathbb{C}$ are linearly independent
over $\mathbb{R}$, we denote by $L$ the $\mathbb{Z}$-module
generated by $p_1,p_2$ and we put $\omega=\frac{p_2}{p_1}$. Since
the series
\begin{equation*}
\sum\limits_{\lambda\in L\setminus\{0\}}\frac{1}{|\lambda|^3}
\end{equation*}
is convergent, the Weierstrass product (\cite{We1})
\begin{equation*}
z\prod_{\lambda\in L\setminus\{0\}}e^{\frac{z}{\lambda}+\frac{z^2}{2
\lambda^2}}\left(1-\frac{z}{\lambda}\right)
\end{equation*}
defines an entire function $\sigma(z)$ whose set of zeros is
precisely $L$, and which is known as the Weierstrass sigma-function.
The following transformation property is probably well-known but we
include a proof, for completeness.
\begin{lemma}
For every $\xi_0\in\mathbb{C}$ and $j\in\{1,2\}$ the Weierstrass
sigma-function satisfies
\begin{equation*}
\frac{\sigma(z)}{\sigma(-\xi_0+z)}\cdot
\frac{\sigma(-\xi_0+z+p_j)}{\sigma(z+p_j)}=e^{v_j},
\end{equation*}
where
\begin{equation*}
v_j=-\frac{3\xi_0}{p_j}+\xi_0p_j^2 \sum_{\lambda\in
L\setminus\{0,-p_j\}}\frac{1}{\lambda(\lambda+p_j)^2}.
\end{equation*}
\end{lemma}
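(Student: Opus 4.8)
The plan is to set $F(z)$ equal to the left-hand side and prove first that $F$ is constant, then evaluate that constant. Writing $\zeta:=\sigma'/\sigma$ for the logarithmic derivative of the sigma-function, termwise logarithmic differentiation of the Weierstrass product (legitimate by its local uniform convergence) produces the Weierstrass zeta function
\[
\zeta(z)=\frac1z+\sum_{\lambda\in L\setminus\{0\}}\left(\frac1{z-\lambda}+\frac1\lambda+\frac z{\lambda^2}\right).
\]
Taking the logarithmic derivative of $F$ then gives
\[
\frac{F'(z)}{F(z)}=\big[\zeta(z-\xi_0+p_j)-\zeta(z-\xi_0)\big]-\big[\zeta(z+p_j)-\zeta(z)\big].
\]

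The key structural fact is that $w\mapsto\zeta(w+p_j)-\zeta(w)$ is constant. Indeed its derivative is $-[\wp(w+p_j)-\wp(w)]$, where $\wp=-\zeta'$; since $\wp'(w)=-2\sum_{\lambda\in L}(w-\lambda)^{-3}$ is manifestly $L$-periodic and $\wp$ is even, $\wp$ is itself $L$-periodic, so that derivative vanishes. Denoting this constant by $\eta_j$, the two bracketed terms above coincide, hence $F'/F\equiv0$ and $F\equiv e^{v_j}$ for some constant $v_j$. Because $F$ does not depend on $z$, I would read off $v_j$ by differentiating $\log F$ in $\xi_0$ instead: one finds $\partial_{\xi_0}\log F=\zeta(z-\xi_0)-\zeta(z-\xi_0+p_j)=-\eta_j$, and since $F=1$ at $\xi_0=0$ this integrates to $v_j=-\eta_j\xi_0$.

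It then remains to put $\eta_j$ into the stated form, which I would compute as $\eta_j=\lim_{z\to0}[\zeta(z+p_j)-\zeta(z)]$. Isolating the $\lambda=p_j$ term of $\zeta(z+p_j)$, which supplies the pole cancelling that of $\zeta(z)$, and letting $z\to0$ yields $\eta_j=3/p_j+\sum_{\lambda\in L\setminus\{0,p_j\}}\big(\tfrac1{p_j-\lambda}+\tfrac1\lambda+\tfrac{p_j}{\lambda^2}\big)$. Replacing $\lambda$ by $-\lambda$ and combining partial fractions rewrites the summand as $p_j^2/[\lambda^2(\lambda+p_j)]$. Finally, the involution $\lambda\mapsto-\lambda-p_j$ of $L\setminus\{0,-p_j\}$ fixes $\lambda^2(\lambda+p_j)^2$ but negates $2\lambda+p_j$, so $\sum(2\lambda+p_j)/[\lambda^2(\lambda+p_j)^2]=0$; equivalently $\sum1/[\lambda^2(\lambda+p_j)]=-\sum1/[\lambda(\lambda+p_j)^2]$. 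This converts $\eta_j$ into $3/p_j-p_j^2\sum1/[\lambda(\lambda+p_j)^2]$, so that $v_j=-\eta_j\xi_0$ is exactly the asserted expression.

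I expect the main obstacle to be this last reorganization: the natural series for the quasi-period does not literally carry the summand $1/[\lambda(\lambda+p_j)^2]$, and only the symmetry argument—together with the absolute $O(|\lambda|^{-3})$ convergence that makes the reindexings legitimate—brings it to the prescribed shape. Establishing that $\zeta(w+p_j)-\zeta(w)$ is constant (equivalently, the $L$-periodicity of $\wp$) is the other step requiring care if one wishes to avoid importing standard Weierstrass facts.
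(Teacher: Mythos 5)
Your proof is correct, and it takes a genuinely different route from the paper's. The paper works directly with the Weierstrass product: it shifts the index $\lambda\mapsto\lambda+p_j$ in the product representing $\sigma(z+p_j)/\sigma(-\xi_0+z+p_j)$, divides, and reads off an exponent of the form $u_jz+v_j-\tfrac12\xi_0u_j$ with $u_j=\xi_0\sum\bigl(\lambda^{-2}-(\lambda+p_j)^{-2}\bigr)$; it must then prove separately that $u_j=0$, which it does by writing the (absolutely convergent) lattice sum as a double sum over $(m,n)$ and telescoping each row in $m$. Your argument replaces exactly that step --- the $z$-independence of the cross-ratio --- by the quasi-periodicity of $\zeta=\sigma'/\sigma$, reduced to the $L$-periodicity of $\wp$; this is shorter and explains structurally why the coefficient of $z$ must vanish. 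Likewise, your identification $v_j=-\eta_j\xi_0$ by integrating in $\xi_0$ from the base point $\xi_0=0$ (where the ratio is identically $1$) makes the linearity of $v_j$ in $\xi_0$ transparent, whereas the paper's computation produces it without comment. Your closing involution $\lambda\mapsto-\lambda-p_j$, legitimate by the $O(|\lambda|^{-3})$ absolute convergence, is what converts the natural expression for the quasi-period into the exact summand $1/[\lambda(\lambda+p_j)^2]$ of the statement; the paper instead reaches that form by combining $\tfrac1\lambda-\tfrac1{\lambda+p_j}$ with $-\tfrac{p_j}{(\lambda+p_j)^2}$ over a common denominator. The trade-off is clear: the paper stays entirely at the level of the defining product and needs no facts about $\zeta$ or $\wp$, while you import (or rederive, as you sketch via the evenness of $\wp$ and the manifest periodicity of $\wp'$) the standard quasi-period machinery in exchange for a more conceptual and less computational argument. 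One small point to spell out if you write this up: the passage from ``$\wp'$ is $L$-periodic and $\wp$ is even'' to ``$\wp$ is $L$-periodic'' needs the evaluation of the constant $\wp(w+p_j)-\wp(w)$ at $w=-p_j/2$.
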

\begin{proof}
For every $\xi_0\in\mathbb{C}$ and $j\in\{1,2\}$ we have
\begin{align*}
\frac{\sigma(z)}{\sigma(-\xi_0+z)}&=\frac{z}{-\xi_0+z}\prod_{\lambda\in
L\setminus\{0\}}e^{\frac{\xi_0}{\lambda}-\frac{\xi_0^2}{2\lambda^2}
+\frac{\xi_0z}{\lambda^2}}\frac{\lambda-z}{\lambda+\xi_0-z}\\&=
\frac{z(z+p_j)e^{-\frac{\xi_0}{p_j}
-\frac{\xi_0^2}{2p_j^2}+\frac{\xi_0z}{p_j^2}}}{(-\xi_0+z)(-\xi_0+z+p_j)}\\
&\cdot\prod_{\lambda\in
L\setminus\{0,-p_j\}}e^{\frac{\xi_0}{\lambda}-\frac{\xi_0^2}{2\lambda^2}
+\frac{\xi_0z}{\lambda^2}}\frac{\lambda-z}{\lambda+\xi_0-z}
\end{align*}
and
\begin{align*}
\frac{\sigma(z+p_j)}{\sigma(-\xi_0+z+p_j)}&=\frac{z+p_j}{-\xi_0+z+p_j}
\prod_{\lambda\in
L\setminus\{0\}}e^{\frac{\xi_0}{\lambda}-\frac{\xi_0^2}{2\lambda^2}
+\frac{\xi_0(z+p_j)}{\lambda^2}}\frac{\lambda-p_j-z}{\lambda-p_j+\xi_0-z}\\
&=\frac{z+p_j}{-\xi_0+z+p_j}\\&\cdot\prod_{\lambda\in
L\setminus\{-p_j\}}e^{\frac{\xi_0}{\lambda+p_j}-\frac{\xi_0^2}{2(\lambda+p_j)^2}
+\frac{\xi_0(z+p_j)}{(\lambda+p_j)^2}}\frac{\lambda-z}{\lambda+\xi_0-z}
\\&= \frac{z(z+p_j)e^{\frac{2\xi_0}{p_j}
-\frac{\xi_0^2}{2p_j^2}+\frac{\xi_0z}{p_j^2}}}{(-\xi_0+z)(-\xi_0+z+p_j)}\\
&\cdot\prod_{\lambda\in
L\setminus\{0,-p_j\}}e^{\frac{\xi_0}{\lambda+p_j}-\frac{\xi_0^2}{2(\lambda+p_j)^2}
+\frac{\xi_0(z+p_j)}{(\lambda+p_j)^2}}\frac{\lambda-z}{\lambda+\xi_0-z}
\end{align*}
therefore
\begin{align*}
\frac{\sigma(z)}{\sigma(-\xi_0+z)}&\cdot
\frac{\sigma(-\xi_0+z+p_j)}{\sigma(z+p_j)}
\\&=e^{-\frac{3\xi_0}{p_j}+
\sum\limits_{\lambda\in
L\setminus\{0,-p_j\}}\left(\frac{\xi_0}{\lambda}-\frac{\xi_0}{
\lambda+p_j}-\frac{\xi_0^2}{2\lambda^2}+\frac{\xi_0^2}{2(\lambda+p_j)^2}
+\frac{\xi_0z}{\lambda^2}-\frac{\xi_0(z+p_j)}{(\lambda+p_j)^2}\right)}\\&
=e^{u_jz+v_j-\frac{1}{2}\xi_0u_j},
\end{align*}
where
\begin{equation*}
u_j=\xi_0\sum_{\lambda\in
L\setminus\{0,-p_j\}}\left(\frac{1}{\lambda^2}-\frac{1}{(\lambda+p_j)^2}
\right).
\end{equation*}
For $\omega\in\mathbb{C}\setminus\mathbb{R}$, we define
$f_1(\omega)$ by
\begin{align*}
p_1^2u_1&=p_1^2\xi_0\sum_{\lambda\in
L\setminus\{0,-p_1\}}\left(\frac{1}{\lambda^2}-\frac{1}{(\lambda+p_1)^2}
\right)\\&=\xi_0\sum_{(m,n)\in\mathbb{Z}^2\setminus\{(0,0),(-1,0)\}}\left(\frac{1}
{(m+n\omega)^2}-\frac{1}{(m+1+n\omega)^2}\right)\\
&=:f_1(\omega)
\end{align*}
and notice that
\begin{align*}
p_2^2u_2&=p_2^2\xi_0\sum_{\lambda\in
L\setminus\{0,-p_2\}}\left(\frac{1}{\lambda^2}-\frac{1}{(\lambda+p_2)^2}
\right)\\&=\xi_0\sum_{(m,n)\in\mathbb{Z}^2\setminus\{(0,0),(0,-1)\}}
\left(\frac{1}{(\frac{m}{\omega}+n)^2}-\frac{1}{(\frac{m}{\omega}+n+1)^2}\right)\\
&=f_1\left(\frac{1}{\omega}\right).
\end{align*}
It now suffices to show that $f_1(\omega)=0$ for every
$\omega\in\mathbb{C}\setminus\mathbb{R}$:
\begin{align*}
f_1(\omega)&=\xi_0\sum_{n\in\mathbb{Z}\setminus\{0\}}\sum_{m\in\mathbb{Z}}\left(
\frac{1}{(m+n\omega)^2}-\frac{1}{(m+1+n\omega)^2}\right)\\
&+\xi_0\sum_{m\in\mathbb{Z}\setminus\{-1,0\}}\left(\frac{1}{m^2}-\frac{1}{(m+1)^2}
\right)\\&=0.
\end{align*}
\end{proof}
\begin{theorem}
Let $f(z)$ be a nonconstant function meromorphic on $\mathbb{C}$. If
the phase of $f(z)$ is doubly periodic with primitive periods
$p_1,p_2$ and
$\omega=\frac{p_2}{p_1}\in\mathbb{C}\setminus\mathbb{R}$, then there
exists an elliptic function $g(z)$ with periods $p_1,p_2$ such that
\begin{equation*}
f(z)=e^{az}g(z)\frac{\sigma(z)}{\sigma(-\xi_0+z)},
\end{equation*}
where $\xi_0\in\mathbb{C}$, and $a\in\mathbb{C}$ satisfies
$\Im(ap_j)=\Im(v_j)+2m_j\pi$, $j\in\{1,2\}$, for some
$m_1,m_2\in\mathbb{Z}$.
\end{theorem}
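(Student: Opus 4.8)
The plan is to translate the hypothesis on the phase into a multiplicative functional equation for $f$, and then to choose $a$ and $\xi_0$ so that the Lemma cancels the resulting quasi-periods exactly. First I would show that double periodicity of the phase is equivalent to $f(z+p_j)=c_jf(z)$ with $c_j\in(0,\infty)$ for $j\in\{1,2\}$. For each $j$ the ratio $q_j(z)=f(z+p_j)/f(z)$ is meromorphic on $\mathbb{C}$, and on the complement of the discrete zero/pole set of $f$ -- a connected, dense, open set -- its phase $q_j/|q_j|$ equals $1$, so $q_j$ takes values in $(0,\infty)$ there. Since a nonconstant meromorphic function is an open map, $q_j$ must be a positive constant $c_j$.

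Next, for parameters $a,\xi_0\in\mathbb{C}$ still to be determined, I would set $g(z)=e^{-az}\frac{\sigma(-\xi_0+z)}{\sigma(z)}f(z)$, which is meromorphic on $\mathbb{C}$. Rewriting the Lemma as $\frac{\sigma(-\xi_0+z+p_j)}{\sigma(z+p_j)}=e^{v_j}\frac{\sigma(-\xi_0+z)}{\sigma(z)}$ and inserting $f(z+p_j)=c_jf(z)$, a one-line computation gives $g(z+p_j)=c_je^{v_j-ap_j}g(z)$. Hence $g$ is doubly periodic, and therefore elliptic (being meromorphic), precisely when
\begin{equation*}
e^{ap_j-v_j}=c_j,\qquad j\in\{1,2\}.\qquad(\star)
\end{equation*}
Granting $(\star)$, the factorization $f(z)=e^{az}g(z)\frac{\sigma(z)}{\sigma(-\xi_0+z)}$ is immediate, and taking imaginary parts in $(\star)$, where $c_j>0$, gives $\Im(ap_j)=\Im(v_j)+2m_j\pi$ -- the asserted constraint on $a$.

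The heart of the matter, and the step I expect to be the main obstacle, is solving $(\star)$ for $a$ and $\xi_0$. The Lemma exhibits $v_j$ as $\xi_0C_j$ for explicit lattice constants $C_j$, so $(\star)$ becomes the linear system $ap_j-\xi_0C_j=\log c_j+2m_j\pi i$ (with $m_j\in\mathbb{Z}$ and $\log c_j\in\mathbb{R}$) in the unknowns $(a,\xi_0)$. Its determinant is $p_2C_1-p_1C_2$, which I expect to be a nonzero multiple of $2\pi i$ by a Legendre-type relation for the lattice -- the same identity that governs the quasi-periodicity of $\sigma$. Establishing this nondegeneracy is the real work; once it is in hand, every choice of $m_1,m_2$ produces a solution $(a,\xi_0)$, and the construction is complete. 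I would verify the determinant either by identifying $C_j$ with $-\eta_j$, the negatives of the quasi-periods of $\zeta=\sigma'/\sigma$, so that $p_2C_1-p_1C_2=-(\eta_1p_2-\eta_2p_1)$, or by evaluating the series in the Lemma directly.

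As a cross-check -- and an alternative that bypasses the Legendre computation -- one can instead fix $\xi_0$ by divisor bookkeeping. Since $f(z+p_j)=c_jf(z)$, the logarithmic derivative $f'/f$ is elliptic, so by the argument principle $f$ has equally many zeros $a_1,\dots,a_N$ and poles $b_1,\dots,b_N$ per period; the classical sigma-product representation then gives $f(z)=e^{az+b}\prod_{k}\frac{\sigma(z-a_k)}{\sigma(z-b_k)}$, and choosing $\xi_0=\sum_k b_k-\sum_k a_k$ makes the zeros and poles of $g$ have equal sums modulo $L$, so Abel's theorem renders $g$ elliptic and recovers $(\star)$ for the same $a$.
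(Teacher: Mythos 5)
Your proposal is correct in outline but takes a genuinely different route from the paper's, with one step left open. Both proofs start the same way: double periodicity of the phase forces $f(z+p_j)=c_jf(z)$ with $c_j=e^{\alpha_j}>0$ (the paper cites Semmler and Wegert for this; your open-mapping argument proves it directly). From there the paper determines $\xi_0$ \emph{first}: it integrates $\frac{zf'(z)}{f(z)}$ around the period parallelogram to show that the zeros and poles of $f$ there satisfy $\sum\xi-\sum\gamma\equiv-\xi_0\pmod L$ with $\xi_0\equiv\frac{\alpha_1p_2-\alpha_2p_1}{2\pi i}\pmod L$, invokes Abel's theorem to build an elliptic $g$ with divisor $(\{\xi_0\}\cup\Xi)-(\{0\}\cup\Gamma)$, writes $f=e^{h(z)}g(z)\sigma(z)/\sigma(-\xi_0+z)$ with $h$ entire, and only then uses the Lemma to force $h'$ to be a doubly periodic entire function, hence $h=az+b$ with the stated constraint on $a$. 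You reverse the logic: posit the ansatz, reduce everything to the linear system $(\star)$ in $(a,\xi_0)$, and get ellipticity of $g$ for free, since a doubly periodic meromorphic function is elliptic --- at the price of having to show $p_2C_1-p_1C_2\neq 0$. That nondegeneracy is exactly the Legendre relation: the Lemma is the classical quasi-periodicity $\sigma(z+p_j)=-e^{\eta_j(z+p_j/2)}\sigma(z)$ in disguise, so $C_j=-\eta_j$ and $p_2C_1-p_1C_2=-(\eta_1p_2-\eta_2p_1)=\mp 2\pi i$. This is the one step you flag but do not carry out; it is classical and your plan for it is the right one, but it must be supplied, because the paper's Lemma gives $v_j=\xi_0C_j$ only as a lattice series and never evaluates it. Reassuringly, Cramer's rule applied to $(\star)$ returns $\xi_0\equiv\frac{\alpha_1p_2-\alpha_2p_1}{2\pi i}\pmod L$, matching the paper's $\xi_0$, and your closing ``cross-check'' is essentially the paper's actual proof. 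The net trade-off: your route dispenses with Abel's theorem and the contour integral but needs Legendre; the paper's route needs neither Legendre nor the value of $C_j$, only the vanishing of $u_j$ already established in its Lemma.
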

\begin{proof} We shall use the observation (\cite{SeWe}) that if the
phase of a meromorphic function $f(z)$ on $\mathbb{C}$ has a period
$p$, then $f(z+p)=e^\alpha f(z)$ for some constant
$\alpha\in\mathbb{R}$. Let $\alpha_1,\alpha_2$ be two real numbers
such that $f(z+p_1)=e^{\alpha_1}f(z)$ and
$f(z+p_2)=e^{\alpha_2}f(z)$. Let $F$ denote the parallelogram with
the two vectors $p_1,p_2$ as adjacent sides. The function
$\frac{f'(z)}{f(z)}$ is obviously elliptic with periods $p_1,p_2$
and its integral over $F$ is therefore $0$. In particular, $f(z)$
has the same number of zeros and poles inside $F$. Let $\Xi$ be the
multiset (we take into consideration the multiplicities that occur)
consisting of all zeros of $f(z)$ inside $F$ and $\Gamma$ be the
multiset consisting of all poles of $f(z)$ inside $F$. We then have
\begin{align*}
\sum_{\xi\in\Xi}\xi-\sum_{\gamma\in\Gamma}\gamma&=\frac{1}{2\pi
i}\int_F\frac{zf'(z)}{f(z)}dz\\ &=\frac{1}{2\pi
i}\int_0^{p_1}(z-(z+p_2))\frac{f'(z)}{f(z)}dz\\
&-\frac{1}{2\pi
i}\int_0^{p_2}(z-(z+p_1))\frac{f'(z)}{f(z)}dz\\
&=-\frac{p_2}{2\pi i}\int_0^{p_1}d\log f(z) +\frac{p_1}{2\pi
i}\int_0^{p_2}d\log f(z)\\ &=-\frac{p_2}{2\pi i}(\alpha_1+2n_1\pi
i)+\frac{p_1}{2\pi i}(\alpha_2+2n_2\pi i)\\
&\equiv-\xi_0\,\,\,\mbox{mod}\,\,\,L
\end{align*}
for some $n_1,n_2\in\mathbb{Z}$, where $\xi_0$ is the unique number
inside $F$ that is congruent to $\frac{\alpha_1p_2-\alpha_2p_1}{2\pi
i}$ modulo $L$. The congruence above is equivalent to
\begin{equation*}
\sum_{\xi\in\{\xi_0\}\cup\Xi}\xi-\sum_{\gamma\in\{0\}\cup\Gamma}
\gamma\equiv 0\,\,\,\mbox{mod}\,\,\,L
\end{equation*}
hence, by Abel's theorem (\cite{Ab1, Ab2, Ab3}), there exists an
elliptic function $g(z)$ with periods $p_1,p_2$ such that its
multisets of zeros and poles inside $F$ are $\{\xi_0\}\cup\Xi$ and
$\{0\}\cup\Gamma$, respectively. The sets of zeros and poles of the
meromorphic function $\frac{g(z)}{f(z)}$ are then $\xi_0+L$ and $L$,
respectively. Since the meromorphic function
$\frac{\sigma(-\xi_0+z)}{\sigma(z)}$ has exactly the same zeros and
poles as $\frac{g(z)}{f(z)}$, there exists an entire function $h(z)$
such that
\begin{equation*}
f(z)=e^{h(z)}g(z)\frac{\sigma(z)}{\sigma(-\xi_0+z)}.
\end{equation*}
The conditions $f(z+p_j)=e^{\alpha_j}f(z)$, $j\in\{1,2\}$, now imply
\begin{equation*}
e^{h(z+p_j)}\frac{\sigma(z+p_j)}{\sigma(-\xi_0+z+p_j)}=e^{\alpha_j+h(z)}
\frac{\sigma(z)}{\sigma(-\xi_0+z)}
\end{equation*}
hence, using the Lemma,
\begin{align*}
e^{h(z+p_j)-h(z)-\alpha_j}&=\frac{\sigma(z)}{\sigma(-\xi_0+z)}\cdot
\frac{\sigma(-\xi_0+z+p_j)}{\sigma(z+p_j)}\\&=e^{v_j}.
\end{align*}
The function $h'(z)$ is therefore doubly periodic and has no poles,
hence $h'(z)$ is constant. So $h(z)=az+b$ for some
$a,b\in\mathbb{C}$. Moreover, $ap_j-\alpha_j=v_j+2m_j\pi i$ for
$j\in\{1,2\}$ and some $m_1,m_2\in\mathbb{Z}$. We must have
$ap_j-v_j-2m_j\pi i=\alpha_j\in\mathbb{R}$, $j\in\{1,2\}$, that is,
\begin{equation*}
\Im(ap_j)=\Im(v_j)+2m_j\pi,\,\,\,\,j\in\{1,2\}.
\end{equation*}
\end{proof}


\begin{thebibliography}{3}
\bibitem{Ab1}
N. H. Abel, M\'{e}moire sur une propri\'{e}t\'{e} g\'{e}n\'{e}rale
d'une classe tr\`{e}s \'{e}tendue de fonctions transcendantes, {\it
Pr\'{e}sent\'{e} \`{a} l'Acad\'{e}mie des sciences \`{a} Paris le 30
Octobre 1826. M\'{e}moires pr\'{e}sent\'{e}s par divers savants t.
VII, Paris (1841)}.
\bibitem{Ab2}
N. H. Abel, Remarques sur quelques propri\'{e}t\'{e}s
g\'{e}n\'{e}rales d'une certaine sorte de fonctions transcendantes,
{\it Oeuvres Compl\`{e}tes, t. I, Christiania (1839), 288-298}.
\bibitem{Ab3}
N. H. Abel, D\'{e}monstration d'une propri\'{e}t\'{e}
g\'{e}n\'{e}rale d'une certaine classe de fonctions transcendantes,
{\it Oeuvres Compl\`{e}tes, t. I, Christiania (1839), 324-325}.
\bibitem{SeWe}
G. Semmler and E. Wegert, Phase Plots of Complex Functions: A
Journey in Illustration, {\it Notices of the AMS 58 (2011),
768-780}.
\bibitem{We1}
K. Weierstrass, Zur Theorie der eindeutigen analytischen Functionen,
{\it Mathematische Abhandlungen der Akademie der Wissenschaften zu
Berlin (1876), 11-60}.
\bibitem{We2}
K. Weierstrass, Zur Theorie der elliptischen Functionen, {\it
Sitzungsberichte der Akademie der Wissenschaften zu Berlin (1882),
443-451}.
\end{thebibliography}
\end{document}